\newtheorem{theorem}{Theorem}
\newtheorem{lemma}[theorem]{Lemma}
\newtheorem{corollary}[theorem]{Corollary}
\newtheorem{remark}{Remark}
\title{A Constrained Tropical Optimization Problem: Complete Solution and Application Example}
\author{Nikolai Krivulin\thanks{Faculty of Mathematics and Mechanics, Saint Petersburg State University, 28 Universitetsky Ave., Saint Petersburg, 198504, Russia, 
nkk@math.spbu.ru.}\thanks{The work was supported in part by the Russian Foundation for Humanities under Grant \#13-02-00338.}
}
\date{}
\begin{document}

\maketitle

\begin{abstract}
The paper focuses on a multidimensional optimization problem, which is formulated in terms of tropical mathematics and consists in minimizing a nonlinear objective function subject to linear inequality constraints. To solve the problem, we follow an approach based on the introduction of an additional unknown variable to reduce the problem to solving linear inequalities, where the variable plays the role of a parameter. A necessary and sufficient condition for the inequalities to hold is used to evaluate the parameter, whereas the general solution of the inequalities is taken as a solution of the original problem. Under fairly general assumptions, a complete direct solution to the problem is obtained in a compact vector form. The result is applied to solve a problem in project scheduling when an optimal schedule is given by minimizing the flow time of activities in a project under various activity precedence constraints. As an illustration, a numerical example of optimal scheduling is also presented.
\\

\textbf{Key-Words:} idempotent semifield, finite-dimensional semimodule, tropical optimization problem, nonlinear objective function, linear inequality constraint, project scheduling.
\\

\textbf{MSC (2010):} 65K10, 15A80, 90C48, 90B35
\end{abstract}

\section{Introduction}

Tropical optimization problems form a rapidly evolving research domain in the area of tropical (idempotent) mathematics. Multidimensional optimization problems formulated and solved in the framework of tropical mathematics were apparently first considered in \cite{Cuninghamegreen1976Projections,Superville1978Various} shortly after the pioneering works in the area have made their appearance, including \cite{Pandit1961Anew,Cuninghamegreen1962Describing,Giffler1963Scheduling,Vorobjev1963Theextremal,Romanovskii1964Asymptotic}.

The tropical optimization problems arise in real-world applications in various fields, among them are project scheduling \cite{Cuninghamegreen1976Projections,Cuninghamegreen1979Minimax,Zimmermann1981Linear,Zimmermann1984Some,Zimmermann2006Interval,Butkovic2009Introduction,Butkovic2009Onsome,Aminu2012Nonlinear} and location analysis \cite{Cuninghamegreen1991Minimax,Krivulin2011Analgebraic,Krivulin2011Anextremal,Krivulin2012Anew}. Further examples include solutions to problems in transportation networks \cite{Zimmermann1981Linear,Zimmermann2006Interval}, decision making \cite{Elsner2004Maxalgebra,Gursoy2013Theanalytic} and discrete event systems \cite{Gaubert1995Resource,Deschutter2001Model,Krivulin2005Evaluation}.

The problems are formulated in the tropical mathematics setting to minimize a linear or nonlinear objective function defined on vectors of a finite-dimensional semimodule over an idempotent semifield. Both unconstrained and constrained problems are under consideration, where the constraints have the form of linear vector equations and inequalities in the semimodule.

There are tropical optimization problems that are examined in the literature in terms of particular idempotent semifields, whereas some other problems are solved in a more general context, which includes such semifields as a special case. Related solutions often take the form of iterative numerical procedures that produce a solution, or indicate that no solution exists. In other cases, explicit solutions are obtained in a closed form. Many existing approaches, however, offer particular solutions rather than solve the problems completely.

A direct tropical analog of linear programming problems with a linear objective function and linear inequality constraints is among the long-known and extensively studied optimization problems. Complete direct solutions are obtained for the problem under various algebraic assumptions in \cite{Superville1978Various,Zimmermann1981Linear}.

An extended problem with more constraints is considered in \cite{Zimmermann1984Onmaxseparable,Zimmermann1992Optimization,Zimmermann2003Disjunctive,Zimmermann2006Interval} within the framework of max-separable functions. Explicit solutions for the problem are given basically in conventional terms rather than in terms of tropical vector algebra. 

An optimization problem with a linear objective function and two-sided equality constraints is examined in \cite{Butkovic1984Onproperties,Butkovic2010Maxlinear,Butkovic2009Introduction}, where a pseudo-polynomial algorithm is suggested, which produces a solution if any or indicates that no solution exists. A heuristic approach is developed in \cite{Aminu2012Nonlinear} to get an approximate solution of the problem.

A problem with a nonlinear objective function, which arises in the underestimating approximation in the Chebyshev norm, is examined in \cite{Cuninghamegreen1976Projections}, where a complete explicit solution is given. A similar solution to the problem is suggested by \cite{Zimmermann1981Linear}.

A constrained problem of minimizing a Chebyshev-like distance function is solved by a polynomial-time threshold-type algorithm in \cite{Zimmermann1984Some}. An explicit solution to a problem of minimizing the range norm is given in \cite{Butkovic2009Onsome}. A problem with two-sided equality constraints is solved in \cite{Gaubert2012Tropical} by an iterative computational procedure.

Finally, both unconstrained and constrained problems with nonlinear objective functions formulated in terms of a general idempotent semifield are investigated in \cite{Krivulin2006Eigenvalues,Krivulin2006Solution,Krivulin2009Methods,Krivulin2012Anew,Krivulin2012Atropical,Krivulin2013Amultidimensional}. A solution technique applied in these works is based on new results in tropical spectral theory and solutions of linear tropical equations and inequalities. With this technique, direct explicit solutions are obtained in a compact vector form.

In this paper, we examine a multidimensional optimization problem that extends problems in \cite{Krivulin2005Evaluation,Krivulin2006Eigenvalues,Krivulin2009Methods,Krivulin2011Analgebraic,Krivulin2012Atropical,Krivulin2013Amultidimensional} by eliminating restrictions on matrices involved as well as by introducing additional inequality constraints. The problem originates from project scheduling when an optimal schedule is given by minimizing the flow time of activities in a project under various activity precedence constraints.

We formulate the problem in terms of a general idempotent semifield. We follow the approach proposed in \cite{Krivulin2012Atropical,Krivulin2013Amultidimensional} and based on the introduction of an additional unknown variable to reduce the problem to solving linear inequalities, where the variable plays the role of a parameter. A necessary and sufficient condition for the inequalities to hold is used to evaluate the parameter, whereas the solution of the inequalities is taken as a complete direct solution to the original problem. The solution is given in a vector form suitable for both further analysis and applications. 

The rest of the paper is organized as follows. We start with a short concise introduction to tropical algebra in Section~\ref{S-PDR} to provide a formal basis for subsequent solution of optimization problems. In Section~\ref{S-LI}, we examine a system of simultaneous linear inequalities to solve it in a compact vector form. We formulate a general tropical optimization problem, give complete direct solution to the problem, and consider some special cases in Section~\ref{S-AOP}. Finally, Section~\ref{S-AOS} is concerned with application of the results to solve the motivating scheduling problem. To illustrate, a numerical example of optimal schedule development is also presented.

\section{Preliminary definitions and results}\label{S-PDR}

In this section we present a short overview of basic definitions, notation and preliminary results of tropical (idempotent) mathematics to provide a formal framework for the analysis and solution of  optimization problems in the rest of the paper.

Concise introductions to and comprehensive presentations of tropical mathematics are given in different forms in a range of published works, including recent publications \cite{Kolokoltsov1997Idempotent,Golan2003Semirings,Heidergott2006Maxplus,Litvinov2007Themaslov,Butkovic2010Maxlinear}. In the overview below, we mainly follow \cite{Krivulin2006Solution,Krivulin2006Eigenvalues,Krivulin2009Methods}, which offer the prospect of complete direct solution of the problems of interest in a compact vector form. For additional details on and deep insight into the theory and methods of tropical mathematics one can consult the works listed before.

\subsection{Idempotent semifield}

We consider a commutative idempotent semifield $\langle\mathbb{X},\mathbb{0},\mathbb{1},\oplus,\otimes\rangle$ over a set $\mathbb{X}$, which is closed under addition $\oplus$ and multiplication $\otimes$, and has zero $\mathbb{0}$ and identity $\mathbb{1}$. Both addition and multiplication are associative and commutative operations, and multiplication is distributive over addition.

Addition is idempotent, which implies that $x\oplus x=x$ for any $x\in\mathbb{X}$. The addition induces on $\mathbb{X}$ a partial order such that the relation $x\leq y$ holds for $x,y\in\mathbb{X}$ if and only if $x\oplus y=y$. With respect to the order, the addition is isotone in each argument and has an extremal property that $x\leq x\oplus y$ and $y\leq x\oplus y$.

The partial order is considered as extendable to a total order, and so we assume the semifield to be linearly ordered. In what follows, the relation symbols and minimization problems are thought in terms of this order. Note that, according to the order, $x\geq\mathbb{0}$ for all $x\in\mathbb{X}$.

For each $x\in\mathbb{X}_{+}$, where $\mathbb{X}_{+}=\mathbb{X}\setminus\{\mathbb{0}\}$, there exists an inverse $x^{-1}$ that yields $x^{-1}\otimes x=\mathbb{1}$. For any $x\in\mathbb{X}_{+}$ and integer $p\geq1$, the integer power is routinely defined as $x^{0}=\mathbb{1}$, $x^{p}=x^{p-1}\otimes x$, $x^{-p}=(x^{-1})^{p}$, $\mathbb{0}^{p}=\mathbb{0}$, and $\mathbb{0}^{0}=\mathbb{1}$. We further assume that the power notation can be extended to the rational exponents, and so treat the semiring as radicable. Below, we omit the multiplication symbol for the sake of brevity and employ the power notation only in the sense defined.

As examples of the radicable linearly ordered idempotent semifield one can take $\mathbb{R}_{\max,+}=\langle\mathbb{R}\cup\{-\infty\},-\infty,0,\max,+\rangle$ and $\mathbb{R}_{\min,\times}=\langle\mathbb{R}_{+}\cup\{+\infty\},+\infty,1,\min,\times\rangle$. 

The semifield $\mathbb{R}_{\max,+}$ has addition and multiplication defined, respectively, as maximum and arithmetic addition. It is equipped with the zero $\mathbb{0}=-\infty$ and the identity $\mathbb{1}=0$. Each $x\in\mathbb{R}$ is endowed with an inverse $x^{-1}$ that is equal to $-x$ in the ordinary notation. The power $x^{y}$ is actually defined for any $x,y\in\mathbb{R}$ and coincides with the arithmetic product $xy$. The order induced by idempotent addition corresponds to the natural linear order on $\mathbb{R}$.

In the semifield $\mathbb{R}_{\min,\times}$, we have $\oplus=\min$, $\otimes=\times$, $\mathbb{0}=+\infty$, and $\mathbb{1}=1$. The symbols of taking inverse end exponent have ordinary meaning. Idempotent addition produces a reverse order to the natural order on $\mathbb{R}$.

\subsection{Matrix and vector algebra}

We are now concerned with matrices with entries in $\mathbb{X}$. We denote by $\mathbb{X}^{m\times n}$ the set of matrices having $m$ rows and $n$ columns. A matrix with all entries equal to $\mathbb{0}$ is the zero matrix denoted by $\mathbb{0}$. A matrix is called column-regular if it has no columns consisting entirely of zeros.

Addition and multiplication of conforming matrices, as well as multiplication by scalars are defined in the regular way through the scalar operations on $\mathbb{X}$.

Based on properties of scalar addition and multiplication, the matrix operations are elementwise isotone in each argument. For any matrices $\bm{A}$ and $\bm{B}$ of the same size, the elementwise inequalities $\bm{A}\leq\bm{A}\oplus\bm{B}$ and $\bm{B}\leq\bm{A}\oplus\bm{B}$ are valid as well.

Consider square matrices of order $n$ in the set $\mathbb{X}^{n\times n}$. Any matrix with the off-diagonal entries equal to $\mathbb{0}$ is a diagonal matrix. A diagonal matrix that has all diagonal entries equal to $\mathbb{1}$ presents the identity matrix denoted by $\bm{I}$.

The matrix power with non-negative integer exponents is given in the usual way. For any square matrix $\bm{A}$ and integer $p\geq1$, we have $\bm{A}^{0}=\bm{I}$, $\bm{A}^{p}=\bm{A}^{p-1}\bm{A}$.

The trace of a matrix $\bm{A}=(a_{ij})$ is conventionally defined as
$$
\mathop\mathrm{tr}\bm{A}
=
a_{11}\oplus\cdots\oplus a_{nn}.
$$

It is easy to verify that, for any matrices $\bm{A}$ and $\bm{B}$, and scalar $x$, the trace exhibits the standard properties in the form of equalities
$$
\mathop\mathrm{tr}(\bm{A}\oplus\bm{B})
=
\mathop\mathrm{tr}\bm{A}
\oplus
\mathop\mathrm{tr}\bm{B},
\qquad
\mathop\mathrm{tr}(\bm{A}\bm{B})
=
\mathop\mathrm{tr}(\bm{B}\bm{A}),
\qquad
\mathop\mathrm{tr}(x\bm{A})
=
x\mathop\mathrm{tr}\bm{A}.
$$

As usual, a matrix that has only one column (row) is considered as a column (row) vector. The set of column vectors of order $n$ is denoted by $\mathbb{X}^{n}$. A vector that has all components equal to $\mathbb{0}$ is the zero vector. A vector with nonzero components is called regular. The set of regular vectors in $\mathbb{X}^{n}$ is denoted by $\mathbb{X}_{+}^{n}$. 

For any nonzero vector $\bm{x}=(x_{i})\in\mathbb{X}^{n}$, the multiplicative conjugate transpose is a row vector $\bm{x}^{-}=(x_{i}^{-})$ with entries $x_{i}^{-}=x_{i}^{-1}$ if $x_{i}^{-}>\mathbb{0}$, and $x_{i}^{-}=\mathbb{0}$ otherwise.

Below, we well use some properties of multiplicative conjugate transposition, which are easy to verify. Specifically, for any regular vectors $\bm{x},\bm{y}\in\mathbb{X}^{n}$, the componentwise inequality $\bm{x}\leq\bm{y}$ implies that $\bm{x}^{-}\geq\bm{y}^{-}$ and vice versa.

Moreover, for any nonzero vector $\bm{x}\in\mathbb{X}^{n}$, we have the equality $\bm{x}^{-}\bm{x}=\mathbb{1}$. If the vector $\bm{x}$ is regular, then the inequality $\bm{x}\bm{x}^{-}\geq\bm{I}$ holds as well.

\subsection{Spectral radius}

Every square matrix $\bm{A}\in\mathbb{X}^{n\times n}$ defines a linear operator on $\mathbb{X}^{n}$ with certain spectral properties. As usual, a scalar $\lambda$ is an eigenvalue of $\bm{A}$, if there exists a nonzero vector $\bm{x}$ such that $\bm{A}\bm{x}=\lambda\bm{x}$.

The maximum eigenvalue (in the sense of the order on $\mathbb{X}$) is called the spectral radius of the matrix $\bm{A}$ and given by
$$
\lambda
=
\mathop\mathrm{tr}\nolimits\bm{A}\oplus\cdots\oplus\mathop\mathrm{tr}\nolimits^{1/n}(\bm{A}^{n}).
$$

The spectral radius $\lambda$ of any matrix $\bm{A}\in\mathbb{X}^{n\times n}$ possesses a useful extremal property \cite{Cuninghamegreen1979Minimax,Elsner2004Maxalgebra,Krivulin2005Evaluation}, which says that 
$$
\min\ \bm{x}^{-}\bm{A}\bm{x}
=
\lambda,
$$
where the minimum is over all regular vectors $\bm{x}\in\mathbb{X}^{n}$.

\section{Linear inequalities}\label{S-LI}

Solution to optimization problems in the subsequent sections makes use of complete direct solutions of linear tropical inequalities. This section begins with a presentation of results based on solutions given in \cite{Krivulin2006Solution,Krivulin2009Methods,Krivulin2013Amultidimensional} for linear vector inequalities of two types. Furthermore, a problem of simultaneous solution of a system of linear inequalities is considered, which is of independent interest.

\subsection{Preliminary results}

Given a square matrix $\bm{A}\in\mathbb{X}^{n\times n}$ and a vector $\bm{b}\in\mathbb{X}^{n}$, consider a problem of finding all regular solutions $\bm{x}\in\mathbb{X}^{n}$ of the inequality
\begin{equation}
\bm{A}\bm{x}\oplus\bm{b}
\leq
\bm{x}.
\label{I-Axbx}
\end{equation}

To describe a solution, we introduce a function that takes $\bm{A}$ to a scalar
$$
\mathop\mathrm{Tr}(\bm{A})
=
\mathop\mathrm{tr}\bm{A}\oplus\cdots\oplus\mathop\mathrm{tr}\bm{A}^{n}.
$$

Provided that $\mathop\mathrm{Tr}(\bm{A})\leq\mathbb{1}$, we define a matrix
$$
\bm{A}^{\ast}
=
\bm{I}\oplus\bm{A}\oplus\cdots\oplus\bm{A}^{n-1}.
$$

With this notation, we slightly reformulate a useful result, which is apparently first obtained by \cite{Carre1971Analgebra}. In a new form, the result states that, under the condition $\mathop\mathrm{Tr}(\bm{A})\leq\mathbb{1}$, the inequality
$$
\bm{A}^{k}
\leq
\bm{A}^{\ast}
$$
holds for all integer $k\geq0$, and is referred to below as the Carr{\'e} inequality.

The next assertion provides a general solution to inequality \eqref{I-Axbx}.
\begin{theorem}[\cite{Krivulin2006Solution,Krivulin2013Amultidimensional}]
\label{T-IAxbx}
Let $\bm{x}$ be the general regular solution of inequality \eqref{I-Axbx}. Then the following statements are valid:
\begin{enumerate}
\item If $\mathop\mathrm{Tr}(\bm{A})\leq\mathbb{1}$, then $\bm{x}=\bm{A}^{\ast}\bm{u}$ for all regular vectors $\bm{u}$ such that $\bm{u}\geq\bm{b}$.
\item If $\mathop\mathrm{Tr}(\bm{A})>\mathbb{1}$, then there is no regular solution.
\end{enumerate}
\end{theorem}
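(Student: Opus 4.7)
The plan is to treat the two cases separately. For $\mathop\mathrm{Tr}(\bm{A}) \leq \mathbb{1}$, I would verify both inclusions: every vector of the form $\bm{x} = \bm{A}^{\ast}\bm{u}$ with regular $\bm{u} \geq \bm{b}$ satisfies \eqref{I-Axbx}, and conversely every regular solution of \eqref{I-Axbx} can be represented in this form.

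For sufficiency, I would substitute $\bm{x} = \bm{A}^{\ast}\bm{u}$ and compute $\bm{A}\bm{x} = (\bm{A} \oplus \cdots \oplus \bm{A}^{n})\bm{u}$. The Carr{\'e} inequality bounds each $\bm{A}^{k}$ with $k \geq 1$ by $\bm{A}^{\ast}$, giving $\bm{A}\bm{x} \leq \bm{A}^{\ast}\bm{u} = \bm{x}$. Since $\bm{A}^{\ast} \geq \bm{I}$ by definition, $\bm{b} \leq \bm{u} \leq \bm{A}^{\ast}\bm{u} = \bm{x}$, so \eqref{I-Axbx} holds. For necessity, I would split the hypothesis into $\bm{A}\bm{x} \leq \bm{x}$ and $\bm{b} \leq \bm{x}$. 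Iterated left-multiplication by $\bm{A}$, using isotonicity, yields $\bm{A}^{k}\bm{x} \leq \bm{x}$ for every $k \geq 0$, and summing over $k = 0, \ldots, n-1$ gives $\bm{A}^{\ast}\bm{x} \leq \bm{x}$. Combined with the reverse inequality coming from $\bm{A}^{\ast} \geq \bm{I}$, this forces $\bm{x} = \bm{A}^{\ast}\bm{x}$. Taking $\bm{u} = \bm{x}$, which is regular and dominates $\bm{b}$, represents $\bm{x}$ in the required form.

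For the second case I would argue the contrapositive: if any regular $\bm{x}$ satisfies $\bm{A}\bm{x} \leq \bm{x}$, then $\mathop\mathrm{Tr}(\bm{A}) \leq \mathbb{1}$. Left-multiplying by the row vector $\bm{x}^{-}$ and invoking $\bm{x}^{-}\bm{x} = \mathbb{1}$ gives $\bm{x}^{-}\bm{A}\bm{x} \leq \mathbb{1}$. The extremal property $\min\bm{x}^{-}\bm{A}\bm{x} = \lambda$ of the spectral radius then forces $\lambda \leq \mathbb{1}$, and since each $\mathop\mathrm{tr}\nolimits^{1/k}(\bm{A}^{k})$ is a summand defining $\lambda$, isotonicity of multiplication yields $\mathop\mathrm{tr}(\bm{A}^{k}) \leq \mathbb{1}$ for $k = 1, \ldots, n$, whence $\mathop\mathrm{Tr}(\bm{A}) \leq \mathbb{1}$.

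The main obstacle is the non-existence part: without the extremal characterization of the spectral radius, it is not obvious how to extract a scalar constraint on $\bm{A}$ from the mere existence of a regular solution. Everything else is a clean manipulation based on isotonicity, the dominance $\bm{A}^{\ast} \geq \bm{I}$, and the Carr{\'e} inequality.
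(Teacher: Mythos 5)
Your argument is correct. Note that the paper itself offers no proof of Theorem~\ref{T-IAxbx}: it is imported as a known result from the cited references, so there is no in-text proof to compare against. What you supply is a sound, self-contained derivation built entirely from the tools the paper does make available. The sufficiency step ($\bm{A}\bm{A}^{\ast}\bm{u}\leq\bm{A}^{\ast}\bm{u}$ via the Carr\'{e} inequality, plus $\bm{b}\leq\bm{u}\leq\bm{A}^{\ast}\bm{u}$ from $\bm{A}^{\ast}\geq\bm{I}$), the necessity step (iterating $\bm{A}^{k}\bm{x}\leq\bm{x}$ to force $\bm{x}=\bm{A}^{\ast}\bm{x}$ and taking $\bm{u}=\bm{x}$), and the non-existence step (from $\bm{x}^{-}\bm{A}\bm{x}\leq\mathbb{1}$ and the extremal property $\min\bm{x}^{-}\bm{A}\bm{x}=\lambda$, hence $\mathop\mathrm{tr}\nolimits^{1/k}(\bm{A}^{k})\leq\lambda\leq\mathbb{1}$ for each $k$) are all valid in a linearly ordered radicable idempotent semifield. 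One small remark: the detour through the spectral radius in the last step, while perfectly legitimate, is not forced on you; the same conclusion follows more elementarily by reading $\bm{A}^{k}\bm{x}\leq\bm{x}$ componentwise on the diagonal, which gives $a_{ii}^{(k)}x_{i}\leq x_{i}$ and hence $a_{ii}^{(k)}\leq\mathbb{1}$ for a regular $\bm{x}$, so $\mathop\mathrm{tr}(\bm{A}^{k})\leq\mathbb{1}$ for every $k$ directly.
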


We now consider another problem. Given a matrix $\bm{C}\in\mathbb{X}^{m\times n}$ and a vector $\bm{d}\in\mathbb{X}^{m}$, find all regular vectors $\bm{x}\in\mathbb{X}^{n}$ to satisfy the inequality
\begin{equation}
\bm{C}\bm{x}
\leq
\bm{d}.
\label{I-Cxd}
\end{equation}

\begin{lemma}[\cite{Krivulin2009Methods}]
\label{L-ICxd}
A vector $\bm{x}$ is a solution of inequality \eqref{I-Cxd} with a column-regular matrix $\bm{C}$ and regular vector $\bm{d}$ if and only if
$$
\bm{x}
\leq
(\bm{d}^{-}\bm{C})^{-}.
$$
\end{lemma}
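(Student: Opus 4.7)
The plan is to prove the biconditional by handling the two implications separately. The key idea is that left-multiplication by $\bm{d}^{-}$ on one side and by $\bm{d}$ on the other is ``almost invertible'' in the tropical setting, with the discrepancy absorbed by the inequalities $\bm{d}^{-}\bm{d}=\mathbb{1}$ and $\bm{d}\bm{d}^{-}\geq\bm{I}$ that are available for regular $\bm{d}$.

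For the forward direction $\bm{C}\bm{x}\leq\bm{d}\Longrightarrow\bm{x}\leq(\bm{d}^{-}\bm{C})^{-}$: regularity of $\bm{d}$ makes $\bm{d}^{-}$ a row vector with every entry in $\mathbb{X}_{+}$, so by elementwise isotonicity of matrix multiplication we may left-multiply by $\bm{d}^{-}$ and apply $\bm{d}^{-}\bm{d}=\mathbb{1}$ to get $\bm{d}^{-}\bm{C}\bm{x}\leq\mathbb{1}$. Writing $\bm{a}=\bm{d}^{-}\bm{C}$ as a row vector with entries $a_{i}=\bigoplus_{j}d_{j}^{-1}c_{ji}$, the hypotheses that $\bm{C}$ is column-regular and $\bm{d}$ is regular force every $a_{i}$ to be nonzero (each column of $\bm{C}$ contributes at least one nonzero term). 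From $\bigoplus_{i}a_{i}x_{i}\leq\mathbb{1}$, the extremal property of idempotent addition gives $a_{i}x_{i}\leq\mathbb{1}$ for each $i$, and multiplication by $a_{i}^{-1}$ yields $x_{i}\leq a_{i}^{-1}=a_{i}^{-}$, i.e.\ $\bm{x}\leq\bm{a}^{-}=(\bm{d}^{-}\bm{C})^{-}$.

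For the reverse direction, starting from $\bm{x}\leq(\bm{d}^{-}\bm{C})^{-}$, I would left-multiply by the row vector $\bm{d}^{-}\bm{C}$ (again by isotonicity) and use the identity $\bm{a}\bm{a}^{-}=\mathbb{1}$ (valid for any nonzero row vector $\bm{a}$, which $\bm{d}^{-}\bm{C}$ is by the argument above; equivalently, this is the row-vector form of $\bm{x}^{-}\bm{x}=\mathbb{1}$) to obtain $\bm{d}^{-}\bm{C}\bm{x}\leq\mathbb{1}$. Left-multiplying by $\bm{d}$ and invoking $\bm{d}\bm{d}^{-}\geq\bm{I}$ (available since $\bm{d}$ is regular) gives the chain $\bm{C}\bm{x}=\bm{I}\bm{C}\bm{x}\leq\bm{d}\bm{d}^{-}\bm{C}\bm{x}\leq\bm{d}$, completing the proof.

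There is no genuine obstacle here; the only point that needs care is the role of the hypotheses. Column-regularity of $\bm{C}$ together with regularity of $\bm{d}$ is what guarantees that the row vector $\bm{d}^{-}\bm{C}$ has every component in $\mathbb{X}_{+}$, which is simultaneously what makes $(\bm{d}^{-}\bm{C})^{-}$ a well-defined regular vector and what allows the componentwise cancellation $a_{i}x_{i}\leq\mathbb{1}\Rightarrow x_{i}\leq a_{i}^{-1}$ in the forward direction. Both directions reduce to routine manipulations once these conjugation identities and the isotonicity of the operations are in place.
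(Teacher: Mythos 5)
Your proof is correct. The paper does not prove Lemma~\ref{L-ICxd} at all---it imports it by citation---so there is nothing to diverge from; your argument is the standard one for this result, resting exactly on the identities $\bm{d}^{-}\bm{d}=\mathbb{1}$, $\bm{d}\bm{d}^{-}\geq\bm{I}$, isotonicity, and the observation that column-regularity of $\bm{C}$ plus regularity of $\bm{d}$ makes every component of $\bm{d}^{-}\bm{C}$ nonzero, and both implications go through as you describe.
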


\subsection{A system of inequalities}

Consider a problem of simultaneous solution of inequalities \eqref{I-Axbx} and \eqref{I-Cxd} combined into the system
\begin{equation}
\begin{aligned}
\bm{A}\bm{x}\oplus\bm{b}
&\leq
\bm{x},
\\
\bm{C}\bm{x}
&\leq
\bm{d}.
\end{aligned}
\label{I-AxbxCxd}
\end{equation}

A general solution of the system is given by the following statement.
\begin{lemma}\label{L-IAxbxCxd}
Let $\bm{x}$ be the general regular solution of system \eqref{I-AxbxCxd} with a column-regular matrix $\bm{C}$ and regular vector $\bm{d}$. Denote $\Delta=\mathop\mathrm{Tr}(\bm{A})\oplus\bm{d}^{-}\bm{C}\bm{A}^{\ast}\bm{b}$.

Then the following statements hold:
\begin{enumerate}
\item If $\Delta\leq\mathbb{1}$, then $\bm{x}=\bm{A}^{\ast}\bm{u}$, where $\bm{b}\leq\bm{u}\leq(\bm{d}^{-}\bm{C}\bm{A}^{\ast})^{-}$.
\item If $\Delta>\mathbb{1}$, then there is no regular solution.
\end{enumerate}
\end{lemma}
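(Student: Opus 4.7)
The plan is to reduce the system sequentially: first apply Theorem~\ref{T-IAxbx} to the inequality $\bm{A}\bm{x}\oplus\bm{b}\leq\bm{x}$, then substitute the resulting parametric family into $\bm{C}\bm{x}\leq\bm{d}$ and apply Lemma~\ref{L-ICxd}. If $\mathop\mathrm{Tr}(\bm{A})>\mathbb{1}$, the first inequality already admits no regular solution, so neither does the system; since in this case $\Delta\geq\mathop\mathrm{Tr}(\bm{A})>\mathbb{1}$, the lemma holds trivially. Otherwise $\mathop\mathrm{Tr}(\bm{A})\leq\mathbb{1}$, and the regular solutions of the first inequality are exactly $\bm{x}=\bm{A}^{\ast}\bm{u}$ with $\bm{u}\geq\bm{b}$.

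Substituting $\bm{x}=\bm{A}^{\ast}\bm{u}$ into the second inequality yields $\bm{C}\bm{A}^{\ast}\bm{u}\leq\bm{d}$. Before invoking Lemma~\ref{L-ICxd}, I would verify that $\bm{C}\bm{A}^{\ast}$ is column-regular: since $\bm{A}^{\ast}\geq\bm{I}$ and matrix multiplication is isotone, $\bm{C}\bm{A}^{\ast}\geq\bm{C}$ entrywise, so the assumed column-regularity of $\bm{C}$ passes to $\bm{C}\bm{A}^{\ast}$. Lemma~\ref{L-ICxd} then rewrites $\bm{C}\bm{A}^{\ast}\bm{u}\leq\bm{d}$ as $\bm{u}\leq(\bm{d}^{-}\bm{C}\bm{A}^{\ast})^{-}$. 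Combining with the lower bound, the admissible parameters are precisely those $\bm{u}$ satisfying
$$
\bm{b}\leq\bm{u}\leq(\bm{d}^{-}\bm{C}\bm{A}^{\ast})^{-}.
$$

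Such a $\bm{u}$ exists if and only if $\bm{b}\leq(\bm{d}^{-}\bm{C}\bm{A}^{\ast})^{-}$. The key observation—and the step I expect to demand the most care—is the equivalence of this inequality with the scalar condition $\bm{d}^{-}\bm{C}\bm{A}^{\ast}\bm{b}\leq\mathbb{1}$. Writing $\bm{y}=\bm{d}^{-}\bm{C}\bm{A}^{\ast}$ as a row vector, the claim reduces to: $\bm{b}\leq\bm{y}^{-}$ iff $\bm{y}\bm{b}\leq\mathbb{1}$. In the regular case this is immediate entrywise ($b_{i}\leq y_{i}^{-1}$ iff $y_{i}b_{i}\leq\mathbb{1}$); one must separately verify that indices with $y_{i}=\mathbb{0}$ are harmless on both sides, using the convention that $\mathbb{0}^{-}=\mathbb{0}$ and the regularity of $\bm{d}$.

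Putting the two necessary conditions together, a regular solution exists exactly when $\mathop\mathrm{Tr}(\bm{A})\leq\mathbb{1}$ and $\bm{d}^{-}\bm{C}\bm{A}^{\ast}\bm{b}\leq\mathbb{1}$ both hold, which is equivalent to $\Delta\leq\mathbb{1}$. In that case the general regular solution is $\bm{x}=\bm{A}^{\ast}\bm{u}$ with $\bm{u}$ ranging over the interval above, matching the statement; otherwise no regular solution exists, establishing the dichotomy.
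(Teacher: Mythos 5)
Your proposal is correct and follows essentially the same route as the paper: reduce via Theorem~\ref{T-IAxbx} to the parametric family $\bm{x}=\bm{A}^{\ast}\bm{u}$, apply Lemma~\ref{L-ICxd} to the substituted constraint, and characterize nonemptiness of the interval for $\bm{u}$ by the scalar condition $\bm{d}^{-}\bm{C}\bm{A}^{\ast}\bm{b}\leq\mathbb{1}$, absorbing it with $\mathop\mathrm{Tr}(\bm{A})\leq\mathbb{1}$ into $\Delta\leq\mathbb{1}$. One small remark: the clean way to dispose of your worry about indices with $y_{i}=\mathbb{0}$ is to note that $\bm{y}=\bm{d}^{-}\bm{C}\bm{A}^{\ast}$ has no zero entries at all (by regularity of $\bm{d}$ and column-regularity of $\bm{C}\bm{A}^{\ast}$, which you already established), since for $y_{i}=\mathbb{0}$ and $b_{i}>\mathbb{0}$ the two sides of your equivalence would actually disagree; the paper instead argues this step algebraically via $\bm{y}\bm{y}^{-}=\mathbb{1}$ and $\bm{y}^{-}\bm{y}\geq\bm{I}$.
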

\begin{proof}
It follows from Theorem~\ref{T-IAxbx} that the first inequality has regular solutions if and only if the condition $\mathop\mathrm{Tr}(\bm{A})\leq\mathbb{1}$ holds and that all solutions take a general form $\bm{x}=\bm{A}^{\ast}\bm{u}$ for any regular vector $\bm{u}\geq\bm{b}$.

Assume the above condition is satisfied and take the general solution of the first inequality. Substitution of the solution into the second inequality leads to a new system of inequalities with respect to $\bm{u}$, which is given by
\begin{align*}
\bm{C}\bm{A}^{\ast}\bm{u}
&\leq
\bm{d},
\\
\bm{u}
&\geq
\bm{b}.
\end{align*}

Application of Lemma~\ref{L-ICxd} to the first inequality gives a general solution in the form $\bm{u}\leq(\bm{d}^{-}\bm{C}\bm{A}^{\ast})^{-}$, where the right-hand side is a regular vector, since $\bm{A}^{\ast}\geq\bm{I}$. This solution, combined with the second inequality, results in two-sided boundary conditions in the form $\bm{b}\leq\bm{u}\leq(\bm{d}^{-}\bm{C}\bm{A}^{\ast})^{-}$.

The conditions specify a nonempty set only when $\bm{b}\leq(\bm{d}^{-}\bm{C}\bm{A}^{\ast})^{-}$. It is not difficult to verify that the inequality is equivalent to $\bm{d}^{-}\bm{C}\bm{A}^{\ast}\bm{b}\leq\mathbb{1}$. Indeed, multiplying the first inequality on the left by $\bm{d}^{-}\bm{C}\bm{A}^{\ast}$ directly produces the second. We now take the second inequality, multiply it from the left by $(\bm{d}^{-}\bm{C}\bm{A}^{\ast})^{-}$, and then note that $\bm{b}\leq(\bm{d}^{-}\bm{C}\bm{A}^{\ast})^{-}\bm{d}^{-}\bm{C}\bm{A}^{\ast}\bm{b}\leq(\bm{d}^{-}\bm{C}\bm{A}^{\ast})^{-}$, which yields the first inequality.

Both conditions $\mathop\mathrm{Tr}(\bm{A})\leq\mathbb{1}$ and $\bm{d}^{-}\bm{C}\bm{A}^{\ast}\bm{b}\leq\mathbb{1}$ are combined into one equivalent condition 
$\Delta=\mathop\mathrm{Tr}(\bm{A})\oplus\bm{d}^{-}\bm{C}\bm{A}^{\ast}\bm{b}\leq\mathbb{1}$, which completes the proof.
\end{proof}

\begin{remark}
It is possible to represent $\Delta=\mathop\mathrm{Tr}(\bm{A})\oplus\bm{d}^{-}\bm{C}\bm{A}^{\ast}\bm{b}$, provided that $\Delta\leq\mathbb{1}$, in another form to be exploited below. In fact, in this case it holds that
\begin{equation}
\mathop\mathrm{Tr}(\bm{A})\oplus\bm{d}^{-}\bm{C}\bm{A}^{\ast}\bm{b}
=
\bm{d}^{-}\bm{C}\bm{b}
\oplus
\bigoplus_{m=1}^{n}\mathop\mathrm{tr}(\bm{A}^{m}(\bm{I}\oplus\bm{b}\bm{d}^{-}\bm{C})).
\label{E-TrAdCAb}
\end{equation}

To verify the equality, first note that the condition $\Delta\leq\mathbb{1}$ involves $\mathop\mathrm{Tr}(\bm{A})\leq\mathbb{1}$. It follows from the Carr\'{e} inequality that $\bm{A}^{n}\leq\bm{A}^{\ast}$, and thus $\bm{I}\oplus\bm{A}\oplus\cdots\oplus\bm{A}^{n}=\bm{A}^{\ast}$. The left-hand side is now represented as
$$
\mathop\mathrm{Tr}(\bm{A})\oplus\bm{d}^{-}\bm{C}\bm{A}^{\ast}\bm{b}
=
\bigoplus_{m=1}^{n}\mathop\mathrm{tr}\bm{A}^{m}
\oplus
\bm{d}^{-}\bm{C}\bm{b}
\oplus
\bigoplus_{m=1}^{n}\bm{d}^{-}\bm{C}\bm{A}^{m}\bm{b}.
$$

Inserting the trace operator into the last term and combining both terms involving the trace together lead to the desired result.
\end{remark}

\section{An optimization problem}\label{S-AOP} 

In this section we examine an optimization problem with nonlinear objective function and linear inequality constraints. The problem extends those in \cite{Krivulin2005Evaluation,Krivulin2006Eigenvalues,Krivulin2009Methods,Krivulin2011Analgebraic,Krivulin2012Atropical,Krivulin2013Amultidimensional} by eliminating  restrictions on matrices involved as well as by introducing additional inequality constraints.

\subsection{Problem formulation}

Suppose $\mathbb{X}$ is a linearly ordered radicable idempotent semifield. Given matrices $\bm{A},\bm{B}\in\mathbb{X}^{n\times n}$, $\bm{C}\in\mathbb{X}^{m\times n}$ and vectors $\bm{g}\in\mathbb{X}^{n}$, $\bm{h}\in\mathbb{X}^{m}$, the problem is to find all regular vectors $\bm{x}\in\mathbb{X}^{n}$ that
\begin{equation}
\begin{aligned}
&
\text{minimize}
&&
\bm{x}^{-}\bm{A}\bm{x},
\\
&
\text{subject to}
&&
\bm{B}\bm{x}\oplus\bm{g}
\leq
\bm{x},
\\
&&&
\bm{C}
\bm{x}
\leq
\bm{h}.
\end{aligned}
\label{P-xAxBxgxCxh}
\end{equation}

The problem is actually a further generalization of that examined in \cite{Krivulin2013Amultidimensional}, where only the first inequality constraint from \eqref{P-xAxBxgxCxh} is taken into account.

Consider the inequality constraints. It follows from Lemma~\ref{L-IAxbxCxd} that the constraints may have no common regular solution, and so make the entire problem unsolvable. The lemma gives necessary and sufficient conditions for the inequality to define a nonempty feasible set in the form $\mathop\mathrm{Tr}(\bm{B})\oplus\bm{h}^{-}\bm{C}\bm{B}^{\ast}\bm{g}\leq\mathbb{1}$.

Below, we derive a solution to the problem under fairly general assumptions. Some special cases of the problem are also discussed.

\subsection{The main result}

We now give a complete direct solution to problem \eqref{P-xAxBxgxCxh}, which is based on the approach suggested in \cite{Krivulin2012Atropical,Krivulin2013Amultidimensional}. Here, the approach is further developed to handle the new system of inequality constraints through the use of the solution given above for a system of linear inequalities.

We introduce an auxiliary variable to represent the minimum value of the objective function, and then reduce the problem to solution of a system of inequalities, where the variable has the role of a parameter. Necessary and sufficient conditions for the system to have regular solutions are used to evaluate the parameter. Finally, a general solution to the system is exploited as a general solution of the problem.

\begin{theorem}\label{T-xAxBxgxCxh}
Suppose that $\bm{A}$ is a matrix with spectral radius $\lambda>\mathbb{0}$. Let $\bm{B}$ be a matrix, $\bm{C}$ be a column-regular matrix, $\bm{g}$ be a vector, and $\bm{h}$ be a regular vector such that $\mathop\mathrm{Tr}(\bm{B})\oplus\bm{h}^{-}\bm{C}\bm{B}^{\ast}\bm{g}\leq\mathbb{1}$. Define a scalar
\begin{equation}
\theta
=
\bigoplus_{k=1}^{n}\mathop{\bigoplus\hspace{0.0em}}_{0\leq i_{0}+i_{1}+\cdots+i_{k}\leq n-k}\mathop\mathrm{tr}\nolimits^{1/k}(\bm{B}^{i_{0}}(\bm{A}\bm{B}^{i_{1}}\cdots\bm{A}\bm{B}^{i_{k}})(\bm{I}\oplus\bm{g}\bm{h}^{-}\bm{C})).
\label{E-theta}
\end{equation}

Then the minimum in \eqref{P-xAxBxgxCxh} is equal to $\theta$ and attained if and only if
\begin{equation}
\bm{x}
=
(\theta^{-1}\bm{A}\oplus\bm{B})^{\ast}\bm{u},
\qquad
\bm{g}
\leq
\bm{u}
\leq
(\bm{h}^{-}\bm{C}(\theta^{-1}\bm{A}\oplus\bm{B})^{\ast})^{-}.
\label{E-xthetaABu-guhCthetaAB}
\end{equation}
\end{theorem}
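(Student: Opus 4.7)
The plan is to introduce an auxiliary parameter $\theta$ standing for the (to-be-minimized) value of the objective, reduce \eqref{P-xAxBxgxCxh} to a parametric system of the form \eqref{I-AxbxCxd}, and then apply Lemma~\ref{L-IAxbxCxd} to obtain both the optimal value and the solution set in a single stroke.

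First, I would observe that for any regular feasible $\bm{x}$ the value $\theta = \bm{x}^{-}\bm{A}\bm{x}$ satisfies $\theta \geq \lambda > \mathbb{0}$ by the extremal property of the spectral radius stated in Section~\ref{S-PDR}, so $\theta$ is invertible. Multiplying $\bm{x}^{-}\bm{A}\bm{x} \leq \theta$ on the left by $\bm{x}$ and using $\bm{x}\bm{x}^{-} \geq \bm{I}$ yields $\bm{A}\bm{x} \leq \theta\bm{x}$, i.e.\ $\theta^{-1}\bm{A}\bm{x} \leq \bm{x}$. Merging this with the first constraint gives $(\theta^{-1}\bm{A} \oplus \bm{B})\bm{x} \oplus \bm{g} \leq \bm{x}$, and together with $\bm{C}\bm{x} \leq \bm{h}$ the feasibility problem for a given $\theta$ is exactly the system treated by Lemma~\ref{L-IAxbxCxd} with $\theta^{-1}\bm{A} \oplus \bm{B}$ in place of $\bm{A}$ and $\bm{g}$ in place of $\bm{b}$. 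The lemma then says the system has a regular solution iff
\[
\Delta(\theta) = \mathop{\mathrm{Tr}}(\theta^{-1}\bm{A} \oplus \bm{B}) \oplus \bm{h}^{-}\bm{C}(\theta^{-1}\bm{A} \oplus \bm{B})^{\ast}\bm{g} \leq \mathbb{1},
\]
and in that case the full solution set is precisely the one in~\eqref{E-xthetaABu-guhCthetaAB}. Thus the original problem reduces to finding the smallest $\theta$ with $\Delta(\theta) \leq \mathbb{1}$.

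To extract this smallest $\theta$, I would apply the reformulation~\eqref{E-TrAdCAb} with $\bm{M} = \theta^{-1}\bm{A} \oplus \bm{B}$ and $\bm{N} = \bm{I} \oplus \bm{g}\bm{h}^{-}\bm{C}$, rewriting
\[
\Delta(\theta) = \bm{h}^{-}\bm{C}\bm{g} \oplus \bigoplus_{m=1}^{n} \mathop{\mathrm{tr}}(\bm{M}^{m}\bm{N}).
\]
Expanding each $\bm{M}^{m}$ as a tropical sum of length-$m$ words in $\{\theta^{-1}\bm{A}, \bm{B}\}$ and grouping by the number $k$ of $\bm{A}$-letters, each word corresponds uniquely to a shape $(i_{0}, i_{1}, \ldots, i_{k})$ with $i_{0}+\cdots+i_{k} = m-k$ (the counts of $\bm{B}$-letters before, between, and after the $\bm{A}$'s), and contributes the scalar $\theta^{-k}\mathop{\mathrm{tr}}(\bm{B}^{i_{0}}\bm{A}\bm{B}^{i_{1}}\cdots\bm{A}\bm{B}^{i_{k}}\bm{N})$. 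The $k=0$ contributions, together with $\bm{h}^{-}\bm{C}\bm{g}$, telescope (via the identity~\eqref{E-TrAdCAb} applied in reverse and the Carré inequality $\bm{B}^{n} \leq \bm{B}^{\ast}$) into $\mathop{\mathrm{Tr}}(\bm{B}) \oplus \bm{h}^{-}\bm{C}\bm{B}^{\ast}\bm{g}$, which is $\leq \mathbb{1}$ by the hypothesis. For $k \geq 1$, each term $\theta^{-k}\mathop{\mathrm{tr}}(\cdots) \leq \mathbb{1}$ is equivalent to $\theta \geq \mathop{\mathrm{tr}}^{1/k}(\cdots)$, and taking the join over all admissible $k$ and $(i_{0}, \ldots, i_{k})$ with $1 \leq k \leq n$ and $i_{0}+\cdots+i_{k} \leq n-k$ produces precisely the scalar~\eqref{E-theta}. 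Substituting this $\theta$ back into the solution given by Lemma~\ref{L-IAxbxCxd} yields~\eqref{E-xthetaABu-guhCthetaAB}.

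The main technical obstacle is the combinatorial bookkeeping in the last step: cleanly matching the abstract expansion of $\mathop{\mathrm{tr}}(\bm{M}^{m}\bm{N})$ to the indexed sum in~\eqref{E-theta}, verifying that the $k=0$ part collapses into the preassumed feasibility condition rather than constraining $\theta$, and justifying the truncation to $1 \leq k \leq n$, $i_{0}+\cdots+i_{k} \leq n-k$, which relies on the Carré-type bound $(\theta^{-1}\bm{A} \oplus \bm{B})^{n} \leq (\theta^{-1}\bm{A} \oplus \bm{B})^{\ast}$ valid once $\mathop{\mathrm{Tr}}(\theta^{-1}\bm{A} \oplus \bm{B}) \leq \mathbb{1}$. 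Everything else is a routine chaining of Lemma~\ref{L-IAxbxCxd} with the identity~\eqref{E-TrAdCAb}.
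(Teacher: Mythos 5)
Your proposal is correct and follows essentially the same route as the paper's own proof: introduce $\theta$ as a parameter, convert $\bm{x}^{-}\bm{A}\bm{x}\leq\theta$ into $\theta^{-1}\bm{A}\bm{x}\leq\bm{x}$ via conjugate transposition, merge everything into a system of type \eqref{I-AxbxCxd}, apply Lemma~\ref{L-IAxbxCxd} together with \eqref{E-TrAdCAb} and the binomial expansion of $(\theta^{-1}\bm{A}\oplus\bm{B})^{m}$, absorb the $k=0$ terms into the hypothesis $\mathop\mathrm{Tr}(\bm{B})\oplus\bm{h}^{-}\bm{C}\bm{B}^{\ast}\bm{g}\leq\mathbb{1}$, and read off the lower bound \eqref{E-theta} and the solution set \eqref{E-xthetaABu-guhCthetaAB}. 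The only detail worth adding is the converse implication $\theta^{-1}\bm{A}\bm{x}\leq\bm{x}\Rightarrow\bm{x}^{-}\bm{A}\bm{x}\leq\theta$ (multiply on the left by $\theta\bm{x}^{-}$ and use $\bm{x}^{-}\bm{x}=\mathbb{1}$), which the paper states explicitly and which is needed to conclude that every vector of the form \eqref{E-xthetaABu-guhCthetaAB} actually attains the minimum.
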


\begin{proof}
Since the inequality $\mathop\mathrm{Tr}(\bm{B})\oplus\bm{h}^{-}\bm{C}\bm{B}^{\ast}\bm{g}\leq\mathbb{1}$ is valid by the conditions of the theorem, the feasible set of regular vectors in the problem is not empty. Note that the condition implies both inequalities $\mathop\mathrm{Tr}(\bm{B})\leq\mathbb{1}$ and $\bm{h}^{-}\bm{C}\bm{B}^{\ast}\bm{g}\leq\mathbb{1}$.

Denote by $\theta$ the minimum of the objective function on the feasible set and note that $\theta\geq\lambda>\mathbb{0}$. Any regular $\bm{x}$ that yields the minimum must satisfy the system
\begin{align*}
\bm{x}^{-}\bm{A}\bm{x}
&=
\theta,
\\
\bm{B}\bm{x}\oplus\bm{g}
&\leq
\bm{x},
\\
\bm{C}\bm{x}
&\leq
\bm{h}.
\end{align*}

Since for all $\bm{x}$ it holds that $\bm{x}^{-}\bm{A}\bm{x}\geq\theta$, the solution set for the system remains the same if we replace the first equality by the inequality $\bm{x}^{-}\bm{A}\bm{x}\leq\theta$. Moreover, it is easy to verify that for all regular $\bm{x}$ the new inequality is equivalent to the inequality $\theta^{-1}\bm{A}\bm{x}\leq\bm{x}$. Indeed, after left multiplication of the former inequality by $\theta^{-1}\bm{x}$, we have $\theta^{-1}\bm{A}\bm{x}\leq\theta^{-1}\bm{x}\bm{x}^{-}\bm{A}\bm{x}\leq\bm{x}$, which yields the latter inequality. At the same time, left multiplication of the inequality $\theta^{-1}\bm{A}\bm{x}\leq\bm{x}$ by $\theta\bm{x}^{-}$ leads to the inequality $\bm{x}^{-}\bm{A}\bm{x}\leq\theta\bm{x}^{-}\bm{x}=\theta$, and thus both inequalities are equivalent.

The above system now takes the form
\begin{align*}
\theta^{-1}\bm{A}\bm{x}
&\leq
\bm{x},
\\
\bm{B}\bm{x}\oplus\bm{g}
&\leq
\bm{x},
\\
\bm{C}\bm{x}
&\leq
\bm{h}.
\end{align*}

By combining the first two inequalities into one, we arrive at a system in the form of \eqref{I-AxbxCxd},
\begin{equation}
\begin{aligned}
(\theta^{-1}\bm{A}\oplus\bm{B})\bm{x}\oplus\bm{g}
&\leq
\bm{x},
\\
\bm{C}\bm{x}
&\leq
\bm{h}.
\end{aligned}
\label{S-theta1ABxgCxh}
\end{equation}

By Lemma~\ref{L-IAxbxCxd}, the system has regular solutions if and only if
\begin{equation}
\mathop\mathrm{Tr}(\theta^{-1}\bm{A}\oplus\bm{B})\oplus\bm{h}^{-}\bm{C}(\theta^{-1}\bm{A}\oplus\bm{B})^{\ast}\bm{g}
\leq
\mathbb{1}.
\label{I-Trtheta1ABhCtheta1ABg}
\end{equation}

With \eqref{E-TrAdCAb}, the left-hand side in inequality \eqref{I-Trtheta1ABhCtheta1ABg} can be written in another form
$$
\bm{h}^{-}\bm{C}\bm{g}
\oplus
\bigoplus_{m=1}^{n}\mathop\mathrm{tr}((\theta^{-1}\bm{A}\oplus\bm{B})^{m}(\bm{I}\oplus\bm{g}\bm{h}^{-}\bm{C}))
\leq
\mathbb{1}.
$$

To further rearrange the inequality, we write a binomial identity
$$
(\theta^{-1}\bm{A}\oplus\bm{B})^{m}
=
\bm{B}^{m}
\oplus
\bigoplus_{k=1}^{m}\mathop{\bigoplus\hspace{1.2em}}_{i_{0}+i_{1}+\cdots+i_{k}=m-k}\theta^{-k}\bm{B}^{i_{0}}(\bm{A}\bm{B}^{i_{1}}\cdots\bm{A}\bm{B}^{i_{k}}).
$$

Substitution of the identity together with some algebra result in
\begin{multline*}
\bigoplus_{m=1}^{n}\mathop\mathrm{tr}\bm{B}^{m}
\oplus
\bigoplus_{m=0}^{n}\bm{h}^{-}\bm{C}\bm{B}^{m}\bm{g}
\\
\oplus
\bigoplus_{m=1}^{n}\bigoplus_{k=1}^{m}\mathop{\bigoplus\hspace{1.2em}}_{i_{0}+i_{1}+\cdots+i_{k}=m-k}\theta^{-k}\mathop\mathrm{tr}(\bm{B}^{i_{0}}(\bm{A}\bm{B}^{i_{1}}\cdots\bm{A}\bm{B}^{i_{k}})(\bm{I}\oplus\bm{g}\bm{h}^{-}\bm{C}))
\leq
\mathbb{1}.
\end{multline*}

Consider the first two terms on the left. Note that $\mathop\mathrm{Tr}(\bm{B})\leq\mathbb{1}$, and thus $\bm{B}^{n}\leq\bm{B}^{\ast}$. Therefore, we have
$$
\bigoplus_{m=1}^{n}\mathop\mathrm{tr}\bm{B}^{m}
\oplus
\bigoplus_{m=0}^{n}\bm{h}^{-}\bm{C}\bm{B}^{m}\bm{g}
=
\mathop\mathrm{Tr}(\bm{B})
\oplus
\bm{h}^{-}\bm{C}\bm{B}^{\ast}\bm{g}.
$$

Since the inequality $\mathop\mathrm{Tr}(\bm{B})\oplus\bm{h}^{-}\bm{C}\bm{B}^{\ast}\bm{g}\leq\mathbb{1}$ is provided by the conditions of the theorem, these terms can be eliminated to write inequality \eqref{I-Trtheta1ABhCtheta1ABg} in the form 
$$
\bigoplus_{m=1}^{n}\bigoplus_{k=1}^{m}\mathop{\bigoplus\hspace{1.2em}}_{i_{0}+i_{1}+\cdots+i_{k}=m-k}\theta^{-k}\mathop\mathrm{tr}(\bm{B}^{i_{0}}(\bm{A}\bm{B}^{i_{1}}\cdots\bm{A}\bm{B}^{i_{k}})(\bm{I}\oplus\bm{g}\bm{h}^{-}\bm{C}))
\leq
\mathbb{1}.
$$

After rearranging terms, we get an inequality
$$
\bigoplus_{k=1}^{n}\mathop{\bigoplus\hspace{0.0em}}_{0\leq i_{0}+i_{1}+\cdots+i_{k}\leq n-k}\theta^{-k}\mathop\mathrm{tr}(\bm{B}^{i_{0}}(\bm{A}\bm{B}^{i_{1}}\cdots\bm{A}\bm{B}^{i_{k}})(\bm{I}\oplus\bm{g}\bm{h}^{-}\bm{C}))
\leq
\mathbb{1},
$$
which is equivalent to a system of inequalities
$$
\mathop{\bigoplus\hspace{0.0em}}_{0\leq i_{0}+i_{1}+\cdots+i_{k}\leq n-k}\theta^{-k}\mathop\mathrm{tr}(\bm{B}^{i_{0}}(\bm{A}\bm{B}^{i_{1}}\cdots\bm{A}\bm{B}^{i_{k}})(\bm{I}\oplus\bm{g}\bm{h}^{-}\bm{C}))
\leq
\mathbb{1},
\quad
k=1,\ldots,n.
$$

By solving each inequality in the system and then combining the solutions into one, we arrive at a lower bound for $\theta$, which is given by
$$
\theta
\geq
\bigoplus_{k=1}^{n}\mathop{\bigoplus\hspace{0.0em}}_{0\leq i_{0}+i_{1}+\cdots+i_{k}\leq n-k}\mathop\mathrm{tr}\nolimits^{1/k}(\bm{B}^{i_{0}}(\bm{A}\bm{B}^{i_{1}}\cdots\bm{A}\bm{B}^{i_{k}})(\bm{I}\oplus\bm{g}\bm{h}^{-}\bm{C})).
$$

Since $\theta$ is assumed to be the minimum of the objective function in the problem, the last inequality must be satisfied as an equality, which leads to \eqref{E-theta}.

Application of Lemma~\ref{L-IAxbxCxd} to system \eqref{S-theta1ABxgCxh} gives the solution vector $\bm{x}$ that is defined by \eqref{E-xthetaABu-guhCthetaAB}.
\end{proof}

\subsection{Special cases}

In this section we discuss problems that present noteworthy particular cases of the general problem examined above. Another special case is examined in the next section in the context of solving scheduling problems.

First, we assume $\bm{C}=\mathbb{0}$ and consider a problem given by
\begin{equation}
\begin{aligned}
&
\text{minimize}
&&
\bm{x}^{-}\bm{A}\bm{x},
\\
&
\text{subject to}
&&
\bm{B}\bm{x}\oplus\bm{g}
\leq
\bm{x}.
\end{aligned}
\label{P-xAxBxgx}
\end{equation}

A slight rearranging of the proof in Theorem~\ref{T-xAxBxgxCxh} leads to the following solution with a simplified expression for $\theta$ instead of that of \eqref{E-theta}.

\begin{corollary}\label{C-xAxBxgx}
Suppose that $\bm{A}$ is a matrix with spectral radius $\lambda>\mathbb{0}$, $\bm{B}$ is a matrix with $\mathop\mathrm{Tr}(\bm{B})\leq\mathbb{1}$, and $\bm{g}$ is a vector. Define a scalar
$$
\theta
=
\lambda
\oplus
\bigoplus_{k=1}^{n-1}\mathop{\bigoplus\hspace{1.2em}}_{1\leq i_{1}+\cdots+i_{k}\leq n-k}\mathop\mathrm{tr}\nolimits^{1/k}(\bm{A}\bm{B}^{i_{1}}\cdots\bm{A}\bm{B}^{i_{k}}).
$$

Then the minimum in \eqref{P-xAxBxgx} is equal to $\theta$ and attained if and only if
$$
\bm{x}
=
(\theta^{-1}\bm{A}\oplus\bm{B})^{\ast}\bm{u},
\qquad
\bm{u}
\geq
\bm{g}.
$$
\end{corollary}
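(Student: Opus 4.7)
The plan is to repeat the parametric reduction used in the proof of Theorem~\ref{T-xAxBxgxCxh}, but to appeal to Theorem~\ref{T-IAxbx} directly in place of Lemma~\ref{L-IAxbxCxd}, since the second linear inequality constraint is now absent. Let $\theta$ denote the minimum of $\bm{x}^{-}\bm{A}\bm{x}$ over the feasible set; since $\mathop\mathrm{Tr}(\bm{B})\leq\mathbb{1}$, Theorem~\ref{T-IAxbx} guarantees that the constraint $\bm{B}\bm{x}\oplus\bm{g}\leq\bm{x}$ has regular solutions, so $\theta$ is well defined and $\theta\geq\lambda>\mathbb{0}$.

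The same left-multiplication argument used inside the proof of Theorem~\ref{T-xAxBxgxCxh} shows that $\bm{x}^{-}\bm{A}\bm{x}\leq\theta$ is equivalent, for regular $\bm{x}$, to $\theta^{-1}\bm{A}\bm{x}\leq\bm{x}$. Combining this with the constraint yields the single inequality
$$
(\theta^{-1}\bm{A}\oplus\bm{B})\bm{x}\oplus\bm{g}\leq\bm{x},
$$
which has the form of \eqref{I-Axbx}. Theorem~\ref{T-IAxbx} then delivers both the solvability condition $\mathop\mathrm{Tr}(\theta^{-1}\bm{A}\oplus\bm{B})\leq\mathbb{1}$ and the parametric solution $\bm{x}=(\theta^{-1}\bm{A}\oplus\bm{B})^{\ast}\bm{u}$ with $\bm{u}\geq\bm{g}$, which is already the claimed description of $\bm{x}$. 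Only the evaluation of $\theta$ remains.

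To evaluate $\theta$, I expand $(\theta^{-1}\bm{A}\oplus\bm{B})^{m}$ with the binomial identity used in the proof of Theorem~\ref{T-xAxBxgxCxh} and then take the trace. Cyclic invariance of the trace lets me shift any prefix $\bm{B}^{i_{0}}$ onto the last block $\bm{B}^{i_{k}}$, and idempotency collapses the duplicates that appear; summing over $m=1,\ldots,n$ gives
$$
\mathop\mathrm{Tr}(\theta^{-1}\bm{A}\oplus\bm{B})
=
\mathop\mathrm{Tr}(\bm{B})
\oplus
\bigoplus_{k=1}^{n}\mathop{\bigoplus\hspace{1.2em}}_{0\leq i_{1}+\cdots+i_{k}\leq n-k}\theta^{-k}\mathop\mathrm{tr}(\bm{A}\bm{B}^{i_{1}}\cdots\bm{A}\bm{B}^{i_{k}}).
$$
The hypothesis $\mathop\mathrm{Tr}(\bm{B})\leq\mathbb{1}$ eliminates the first term, and the residual inequality $\leq\mathbb{1}$ decouples into the scalar bounds $\theta\geq\mathop\mathrm{tr}^{1/k}(\bm{A}\bm{B}^{i_{1}}\cdots\bm{A}\bm{B}^{i_{k}})$.

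Finally, I split the indexing at $i_{1}+\cdots+i_{k}=0$. The all-zero entries contribute $\bigoplus_{k=1}^{n}\mathop\mathrm{tr}^{1/k}(\bm{A}^{k})=\lambda$, whereas entries with $i_{1}+\cdots+i_{k}\geq 1$ cannot occur at $k=n$ (they would violate $i_{1}+\cdots+i_{k}\leq n-k=0$), so they range only over $k=1,\ldots,n-1$, reproducing the second sum in the statement. Since $\theta$ is the minimum, this lower bound is attained with equality, which gives the asserted formula. The only real subtlety is the cyclic-trace bookkeeping that absorbs $\bm{B}^{i_{0}}$ and the clean isolation of the $\lambda$ term from the rest; after that, the conclusion is immediate from Theorem~\ref{T-IAxbx}.
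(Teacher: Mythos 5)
Your proof is correct and is precisely the ``slight rearranging of the proof of Theorem~\ref{T-xAxBxgxCxh}'' that the paper invokes without writing out: you rerun the parametric reduction with Theorem~\ref{T-IAxbx} in place of Lemma~\ref{L-IAxbxCxd}, and the cyclic-trace absorption of the $\bm{B}^{i_{0}}$ prefix (possible here exactly because the factor $\bm{I}\oplus\bm{g}\bm{h}^{-}\bm{C}$ degenerates to $\bm{I}$) together with the isolation of the all-zero index terms as $\lambda$ correctly yields the simplified formula for $\theta$. No gaps.
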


Note that this result is coincides with that in \cite{Krivulin2013Amultidimensional}.

Finally, suppose that $\bm{B}=\mathbb{0}$ and $\bm{C}=\bm{I}$. Problem \eqref{P-xAxBxgxCxh} takes the form
\begin{equation}
\begin{aligned}
&
\text{minimize}
&&
\bm{x}^{-}\bm{A}\bm{x},
\\
&
\text{subject to}
&&
\bm{g}
\leq
\bm{x}
\leq
\bm{h}.
\end{aligned}
\label{P-xAxgxh}
\end{equation}

In this case, Theorem~\ref{T-xAxBxgxCxh} reduces to the next statement.

\begin{corollary}\label{C-xAxgxh}
Suppose that $\bm{A}$ is a matrix with spectral radius $\lambda>\mathbb{0}$, $\bm{g}$ is a vector, and $\bm{h}$ is a regular vector such that $\bm{h}^{-}\bm{g}\leq\mathbb{1}$. Define a scalar
$$
\theta
=
\lambda
\oplus
\bigoplus_{k=1}^{n}(\bm{h}^{-}\bm{A}^{k}\bm{g})^{1/k}.
$$

Then the minimum in \eqref{P-xAxgxh} is equal to $\theta$ and attained if and only if
$$
\bm{x}
=
(\theta^{-1}\bm{A})^{\ast}\bm{u},
\qquad
\bm{g}
\leq
\bm{u}
\leq
(\bm{h}^{-}(\theta^{-1}\bm{A})^{\ast})^{-}.
$$
\end{corollary}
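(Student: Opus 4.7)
The plan is to derive the corollary as a direct specialization of Theorem~\ref{T-xAxBxgxCxh} with $\bm{B}=\mathbb{0}$ and $\bm{C}=\bm{I}$. First I would verify the feasibility hypothesis of the theorem: substituting $\bm{B}=\mathbb{0}$ gives $\bm{B}^{\ast}=\bm{I}$ and $\mathop\mathrm{Tr}(\bm{B})=\mathbb{0}$, so the general condition $\mathop\mathrm{Tr}(\bm{B})\oplus\bm{h}^{-}\bm{C}\bm{B}^{\ast}\bm{g}\leq\mathbb{1}$ collapses to exactly $\bm{h}^{-}\bm{g}\leq\mathbb{1}$, which is the standing hypothesis of the corollary. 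Also, the constraints $\bm{B}\bm{x}\oplus\bm{g}\leq\bm{x}$ and $\bm{C}\bm{x}\leq\bm{h}$ specialize to $\bm{g}\leq\bm{x}$ and $\bm{x}\leq\bm{h}$, confirming that problem \eqref{P-xAxgxh} is the corresponding instance of \eqref{P-xAxBxgxCxh}.

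Next I would simplify the expression \eqref{E-theta} for $\theta$. Since $\bm{B}^{0}=\bm{I}$ while $\bm{B}^{i}=\mathbb{0}$ for $i\geq1$, only the multi-indices with $i_{0}=i_{1}=\cdots=i_{k}=0$ produce a nonzero summand, and for those the constraint $0\leq i_{0}+\cdots+i_{k}\leq n-k$ is automatic. This reduces \eqref{E-theta} to
\[
\theta
=
\bigoplus_{k=1}^{n}\mathop\mathrm{tr}\nolimits^{1/k}(\bm{A}^{k}(\bm{I}\oplus\bm{g}\bm{h}^{-})).
\]

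I would then distribute the trace: $\mathop\mathrm{tr}(\bm{A}^{k}(\bm{I}\oplus\bm{g}\bm{h}^{-}))=\mathop\mathrm{tr}(\bm{A}^{k})\oplus\mathop\mathrm{tr}(\bm{A}^{k}\bm{g}\bm{h}^{-})$, and invoke the cyclic property to rewrite $\mathop\mathrm{tr}(\bm{A}^{k}\bm{g}\bm{h}^{-})=\mathop\mathrm{tr}(\bm{h}^{-}\bm{A}^{k}\bm{g})=\bm{h}^{-}\bm{A}^{k}\bm{g}$, where the last equality holds because the argument is a scalar. Since the semifield is linearly ordered and radicable, the map $x\mapsto x^{1/k}$ is isotone, so $(a\oplus b)^{1/k}=a^{1/k}\oplus b^{1/k}$. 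Splitting each summand accordingly yields
\[
\theta
=
\bigoplus_{k=1}^{n}\mathop\mathrm{tr}\nolimits^{1/k}(\bm{A}^{k})
\oplus
\bigoplus_{k=1}^{n}(\bm{h}^{-}\bm{A}^{k}\bm{g})^{1/k},
\]
and the first sum is precisely the spectral radius $\lambda$ by the formula recalled in Section~\ref{S-PDR}.

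Finally, the solution vector form drops out of \eqref{E-xthetaABu-guhCthetaAB}: with $\bm{B}=\mathbb{0}$ we have $\theta^{-1}\bm{A}\oplus\bm{B}=\theta^{-1}\bm{A}$, and with $\bm{C}=\bm{I}$ the upper bound on $\bm{u}$ becomes $(\bm{h}^{-}(\theta^{-1}\bm{A})^{\ast})^{-}$, while the lower bound $\bm{u}\geq\bm{g}$ is unchanged. There is no real obstacle in this argument; the only step requiring care is the interchange of the $k$-th root with idempotent addition, which is justified by the linearly ordered radicable structure of $\mathbb{X}$.
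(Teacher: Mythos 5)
Your proposal is correct and is exactly the route the paper intends: the paper offers no separate proof of Corollary~\ref{C-xAxgxh}, simply stating that Theorem~\ref{T-xAxBxgxCxh} "reduces to" it with $\bm{B}=\mathbb{0}$ and $\bm{C}=\bm{I}$, and your specialization of the feasibility condition, of \eqref{E-theta} (keeping only the all-zero multi-indices, splitting the trace, and using cyclicity plus the identity $\bigoplus_{k=1}^{n}\mathop\mathrm{tr}\nolimits^{1/k}(\bm{A}^{k})=\lambda$), and of \eqref{E-xthetaABu-guhCthetaAB} fills in those details correctly.
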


\section{Applications to optimal scheduling}\label{S-AOS}

We start with a real-world problem taken from project scheduling and intended to both motivate and illustrate the results obtained. For further details and references on project scheduling, one can consult \cite{Demeulemeester2002Project,Tkindt2006Multicriteria}. 

We offer a vector representation of the problem in terms of tropical mathematics and then give a complete direct solution illustrated with a numerical example.

\subsection{Minimization of maximum flow time}

Consider a project with $n$ activities (jobs, tasks) constrained by precedence relations, including start-start, start-finish, early-start, and late-finish temporal constraints. For any two activities, the start-start constraints define the minimum allowed time interval between their initiations. The start-finish constraints place a lower bound on the time lag between the initiation of one activity and the completion of another. The activities are assumed to complete as early as possible within the constraints. For each activity, the early-start and late-finish constraints respectively specify the earliest possible time of initiation and the latest possible time of completion. 

Every activity in the project involves its associated flow (turnaround, processing) time defined as the time interval between its initiation and completion. The optimal scheduling problem is to find an initiation time for each activity to minimize the maximum flow time over all activities, subject to the above constraints.

For each activity $i=1,\ldots,n$, we denote by $x_{i}$ the initiation time to be scheduled. Let $g_{i}$ be a lower bound on the initiation time, and $b_{ij}$ be a minimum possible time lag between the initiation of activity $j=1,\ldots,n$ and the initiation of $i$.

The start-start constraints imply that, given the time lags $b_{ij}$, the initiation times are to satisfy the relations
$$
x_{j}+b_{ij}
\leq
x_{i},
\quad
j=1,\ldots,n.
$$ 

Note that if a time lag is not actually fixed, we set it to be equal to $-\infty$.
 
These relations taken together lead to one inequality of the form
$$
\max(x_{1}+b_{i1},\ldots,x_{n}+b_{in})
\leq
x_{i}.
$$

Since, according to the early-start constraints, activity $i$ cannot start earlier than at a predefined time $g_{i}$, we arrive at the inequalities
\begin{equation*}
\max(b_{i1}+x_{1},\ldots,b_{in}+x_{n},g_{i})
\leq
x_{i},
\quad
i=1,\ldots,n.
\end{equation*}

Furthermore, for each activity $i$, let $y_{i}$ be the completion time. We denote by $a_{ij}$ a given minimum possible time lag between the initiation of activity $j$ and the completion of $i$, and by $h_{i}$ a given upper bound on the completion time for $i$. As before, if a time lag $a_{ij}$ appears to be undefined, we put $a_{ij}=-\infty$.

The start-finish constraints require that the completion time $y_{i}$ be subject to the relations
$$
x_{j}+a_{ij}\leq y_{i},
\quad
j=1,\ldots,n,
$$ 
with at least one inequality among them holding as an equality.

By combining the inequalities and adding the upper bound for the completion time, we get the relations
\begin{equation*}
\max(a_{i1}+x_{1},\ldots,a_{in}+x_{n})
=
y_{i},
\quad
h_{i}
\geq
y_{i},
\quad
i=1,\ldots,n.
\end{equation*}

We now formulate a scheduling problem to minimize the maximum flow time over all activities. With an objective function that is readily given by
$$
\max(y_{1}-x_{1},\ldots,y_{n}-x_{n}),
$$
we arrive at a constrained optimization problem to find $x_{i}$ for all $i=1,\ldots,n$ to
\begin{equation}
\begin{aligned}
&
\text{minimize}
&&
\max(y_{1}-x_{1},\ldots,y_{n}-x_{n}),
\\
&
\text{subject to}
&&
\max(b_{i1}+x_{1},\ldots,b_{in}+x_{n},g_{i})
\leq
x_{i},
\\
&
&&
\max(a_{i1}+x_{1},\ldots,a_{in}+x_{n})
=
y_{i},
\quad
h_{i}
\geq
y_{i},
\quad
i=1,\ldots,n.
\end{aligned}
\label{P-yxbxxgxaxyhy}
\end{equation}

\subsection{Representation of scheduling problem}

Since the representation of the problem given by \eqref{P-yxbxxgxaxyhy} involves only usual operations $\max$, addition, and additive inversion, we can translate it into the language of the semifield $\mathbb{R}_{\max,+}$.

First, we replace the standard operations at \eqref{P-yxbxxgxaxyhy} by their tropical counterparts to write the problem in scalar terms as follows: 
\begin{equation*}
\begin{aligned}
&
\text{minimize}
&&
x_{1}^{-1}y_{1}\oplus\cdots\oplus x_{n}^{-1}y_{n},
\\
&
\text{subject to}
&&
b_{i1}x_{1}\oplus\cdots\oplus b_{in}x_{n}\oplus g_{i}
\leq
x_{i},
\\
&
&&
a_{i1}x_{1}\oplus\cdots\oplus a_{in}x_{n}
=
y_{i},
\quad
h_{i}
\geq
y_{i},
\quad
i=1,\ldots,n.
\end{aligned}
\end{equation*}

Furthermore, we introduce matrices and vectors
$$
\bm{A}
=
(a_{ij}),
\quad
\bm{B}
=
(b_{ij}),
\quad
\bm{g}
=
(g_{i}),
\quad
\bm{h}
=
(h_{i}),
\quad
\bm{x}
=
(x_{i}),
\quad
\bm{y}
=
(y_{i}).
$$

In matrix-vector notation, the problem is to find regular vectors $\bm{x}$ such that
\begin{equation}
\begin{aligned}
&
\text{minimize}
&&
\bm{x}^{-}\bm{y},
\\
&
\text{subject to}
&&
\bm{B}\bm{x}\oplus\bm{g}
\leq
\bm{x},
\\
&
&&
\bm{A}\bm{x}
=
\bm{y},
\quad
\bm{h}
\geq
\bm{y}.
\end{aligned}
\label{P-xyBxgxAxyhy}
\end{equation}

\subsection{Solution of scheduling problem}

A complete direct solution to the scheduling problem is given in terms of the semifield $\mathbb{R}_{\max,+}$ by the next result.

\begin{theorem}\label{T-xyBxgxAxyhy}
Let $\bm{x}$ and $\bm{y}$ be the general regular solution of problem \eqref{P-xyBxgxAxyhy}, which involves a column-regular matrix $\bm{A}$ with a nonzero spectral radius, and a regular vector $\bm{h}$. Define $\Delta=\mathop\mathrm{Tr}(\bm{B})\oplus\bm{h}^{-}\bm{A}\bm{B}^{\ast}\bm{g}$ and
\begin{equation}
\theta
=
\bigoplus_{k=1}^{n}\mathop{\bigoplus\hspace{0.0em}}_{0\leq i_{0}+i_{1}+\cdots+i_{k}\leq n-k}\mathop\mathrm{tr}\nolimits^{1/k}(\bm{B}^{i_{0}}(\bm{A}\bm{B}^{i_{1}}\cdots\bm{A}\bm{B}^{i_{k}})(\bm{I}\oplus\bm{g}\bm{h}^{-}\bm{A})).
\label{E-theta1}
\end{equation}

Then the following statements are valid:
\begin{enumerate}
\item If $\Delta\leq\mathbb{1}$, then $\theta$ is the minimum in \eqref{P-xyBxgxAxyhy}, attained at
\begin{equation}
\bm{x}
=
\bm{S}^{\ast}\bm{u},
\qquad
\bm{y}
=
\bm{A}\bm{S}^{\ast}\bm{u},
\label{E-xy}
\end{equation}
where $\bm{S}=\theta^{-1}\bm{A}\oplus\bm{B}$, and $\bm{u}$ is any regular vector such that
\begin{equation}
\bm{g}\leq\bm{u}\leq(\bm{h}^{-}\bm{A}\bm{S}^{\ast})^{-};
\label{E-u}
\end{equation}
\item If $\Delta>\mathbb{1}$, then there is no regular solution.
\end{enumerate}
\end{theorem}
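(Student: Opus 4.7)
The plan is to eliminate the auxiliary vector $\bm{y}$ and reduce the scheduling problem \eqref{P-xyBxgxAxyhy} to an instance of the general problem \eqref{P-xAxBxgxCxh}, to which Theorem~\ref{T-xAxBxgxCxh} then applies directly. Using the equality constraint $\bm{A}\bm{x}=\bm{y}$, I would substitute $\bm{y}$ out of both the objective and the remaining constraints. The objective $\bm{x}^{-}\bm{y}$ becomes $\bm{x}^{-}\bm{A}\bm{x}$, while the upper bound $\bm{y}\leq\bm{h}$ becomes the linear inequality $\bm{A}\bm{x}\leq\bm{h}$. The resulting problem in $\bm{x}$ alone is exactly \eqref{P-xAxBxgxCxh} with constraint matrix $\bm{C}$ replaced by $\bm{A}$ and constraint vector in place of the right-hand side equal to $\bm{h}$.

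Next I would verify that the hypotheses of Theorem~\ref{T-xAxBxgxCxh} are satisfied under this substitution. The assumed column-regularity of $\bm{A}$ supplies the column-regularity needed for the role of $\bm{C}$, and the assumption that $\bm{A}$ has spectral radius $\lambda>\mathbb{0}$ gives exactly what the theorem requires of the objective matrix. The feasibility condition $\mathop{\mathrm{Tr}}(\bm{B})\oplus\bm{h}^{-}\bm{C}\bm{B}^{\ast}\bm{g}\leq\mathbb{1}$ of Theorem~\ref{T-xAxBxgxCxh}, instantiated at $\bm{C}=\bm{A}$, is precisely the hypothesis $\Delta\leq\mathbb{1}$ stated here. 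When instead $\Delta>\mathbb{1}$, Lemma~\ref{L-IAxbxCxd} (applied with $\bm{C}=\bm{A}$, $\bm{d}=\bm{h}$) already rules out any regular $\bm{x}$ satisfying both constraints of the reduced system, which yields the second statement of the theorem at once.

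Finally I would read off the conclusion by substituting $\bm{C}=\bm{A}$ into the output of Theorem~\ref{T-xAxBxgxCxh}. Formula \eqref{E-theta} then specializes verbatim to \eqref{E-theta1}, and the description of $\bm{x}$ in \eqref{E-xthetaABu-guhCthetaAB} becomes $\bm{x}=(\theta^{-1}\bm{A}\oplus\bm{B})^{\ast}\bm{u}=\bm{S}^{\ast}\bm{u}$ with the two-sided bound $\bm{g}\leq\bm{u}\leq(\bm{h}^{-}\bm{A}\bm{S}^{\ast})^{-}$, matching \eqref{E-xy} and \eqref{E-u}. The companion vector is recovered from the eliminated equality as $\bm{y}=\bm{A}\bm{x}=\bm{A}\bm{S}^{\ast}\bm{u}$.

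I do not expect a serious technical obstacle, since the heavy combinatorial work has already been carried out inside the proof of Theorem~\ref{T-xAxBxgxCxh}. The one point that deserves an explicit sentence of justification is that the equality $\bm{A}\bm{x}=\bm{y}$ may legitimately be used as a substitution rather than treated as an additional restriction: because $\bm{y}$ enters only through the objective $\bm{x}^{-}\bm{y}$ and through the upper bound $\bm{y}\leq\bm{h}$, once $\bm{x}$ is fixed the equality determines $\bm{y}$ uniquely, and the transformed objective and inequality faithfully encode the original problem.
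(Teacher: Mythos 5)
Your proposal is correct and follows exactly the paper's own argument: eliminate $\bm{y}$ via the substitution $\bm{y}=\bm{A}\bm{x}$, recognize the reduced problem as \eqref{P-xAxBxgxCxh} with $\bm{C}=\bm{A}$, apply Theorem~\ref{T-xAxBxgxCxh}, and back-substitute to recover $\bm{y}$. Your extra remarks --- invoking Lemma~\ref{L-IAxbxCxd} for the $\Delta>\mathbb{1}$ case and justifying the substitution of the equality constraint --- are sound and, if anything, slightly more explicit than the paper's proof.
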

\begin{proof}
To solve problem \eqref{P-xyBxgxAxyhy}, we first eliminate the unknown vector $\bm{y}$ by the substitution $\bm{y}=\bm{A}\bm{x}$ wherever it appears. By this means, we arrive at a problem with respect to the vector $\bm{x}$, which takes the form of \eqref{P-xAxBxgxCxh} with $\bm{C}=\bm{A}$. Application of Theorem~\ref{T-xAxBxgxCxh} to the last problem gives a solution in terms of $\bm{x}$. Back substitution of the solution into the equality $\bm{y}=\bm{A}\bm{x}$ completes the proof.
\end{proof}

\subsection{Numerical example}

To illustrate the above result, we take an example project of three activities under constraints given by
$$
\bm{A}
=
\left(
\begin{array}{ccc}
4 & 0 & \mathbb{0}
\\
2 & 3 & 1
\\
1 & 1 & 3
\end{array}
\right),
\quad
\bm{B}
=
\left(
\begin{array}{rrr}
\mathbb{0} & -2 & 1
\\
0 & \mathbb{0} & 2
\\
-1 & \mathbb{0} & \mathbb{0}
\end{array}
\right),
\quad
\bm{g}
=
\left(
\begin{array}{c}
0
\\
0
\\
0
\end{array}
\right),
\quad
\bm{h}
=
\left(
\begin{array}{c}
5
\\
5
\\
5
\end{array}
\right),
$$
where the notation $\mathbb{0}=-\infty$ is used to save writing.

We start with verification of the existence conditions for regular solutions in Theorem~\ref{T-xyBxgxAxyhy}. We take the matrix $\bm{B}$ and calculate
$$
\bm{B}^{2}
=
\left(
\begin{array}{rrr}
0 & \mathbb{0} & 0
\\
1 & -2 & 1
\\
\mathbb{0} & -3 & 0
\end{array}
\right),
\quad
\bm{B}^{3}
=
\left(
\begin{array}{rrr}
-1 & -2 & 1
\\
0 & -1 & 2
\\
-1 & \mathbb{0} & -1
\end{array}
\right),
\quad
\bm{B}^{\ast}
=
\left(
\begin{array}{rrr}
0 & -2 & 1
\\
1 & 0 & 2
\\
-1 & -3 & 0
\end{array}
\right).
$$

Furthermore, we get $\mathop\mathrm{Tr}(\bm{B})=0$ and obtain
$$
\bm{A}\bm{B}^{\ast}
=
\left(
\begin{array}{ccc}
4 & 2 & 5
\\
4 & 3 & 5
\\
2 & 1 & 3
\end{array}
\right),
\qquad
\bm{h}^{-}\bm{A}\bm{B}^{\ast}
=
\left(
\begin{array}{ccc}
-1 & -2 & 0
\end{array}
\right),
\qquad
\bm{h}^{-}\bm{A}\bm{B}^{\ast}\bm{g}
=
0.
$$

Since we have $\mathop\mathrm{Tr}(\bm{B})\oplus\bm{h}^{-}\bm{A}\bm{B}^{\ast}\bm{g}=0=\mathbb{1}$, we conclude that the problem under study has regular solutions.

To get the solutions, we need to evaluate $\theta$ which is given by \eqref{E-theta1}. Considering that $n=3$, we represent $\theta$ with three terms as follows
$$
\theta
=
\mathop\mathrm{tr}\nolimits(\bm{C}_{1}\bm{D})
\oplus
\mathop\mathrm{tr}\nolimits^{1/2}(\bm{C}_{2}\bm{D})
\oplus
\mathop\mathrm{tr}\nolimits^{1/3}(\bm{C}_{3}\bm{D}),
$$
where
\begin{gather*}
\bm{C}_{1}
=
\bm{A}\oplus\bm{B}\bm{A}\oplus\bm{A}\bm{B}\oplus\bm{B}^{2}\bm{A}\oplus\bm{B}\bm{A}\bm{B}\oplus\bm{A}\bm{B}^{2},
\\
\bm{C}_{2}
=
\bm{A}^{2}\oplus\bm{B}\bm{A}^{2}\oplus\bm{A}\bm{B}\bm{A}\oplus\bm{A}^{2}\bm{B},
\qquad
\bm{C}_{3}
=
\bm{A}^{3},
\qquad
\bm{D}
=
\bm{I}\oplus\bm{g}\bm{h}^{-}\bm{A}.
\end{gather*}

First, we calculate the matrices
$$
\bm{A}^{2}
=
\left(
\begin{array}{ccc}
8 & 4 & 1
\\
6 & 6 & 4
\\
5 & 4 & 6
\end{array}
\right),
\quad
\bm{A}^{3}
=
\left(
\begin{array}{ccc}
12 & 8 & 5
\\
10 & 9 & 7
\\
9 & 7 & 9
\end{array}
\right),
\quad
\bm{D}
=
\left(
\begin{array}{rrr}
0 & -2 & -2
\\
-1 & 0 & -2
\\
-1 & -2 & 0
\end{array}
\right).
$$

To obtain the first term in the representation of $\theta$, we successively find
\begin{gather*}
\bm{B}\bm{A}
=
\left(
\begin{array}{rrr}
2 & 2 & 4
\\
4 & 3 & 5
\\
3 & -1 & \mathbb{0}
\end{array}
\right),
\quad
\bm{A}\bm{B}
=
\left(
\begin{array}{rrr}
0 & 2 & 5
\\
3 & 0 & 5
\\
2 & -1 & 3
\end{array}
\right),
\quad
\bm{B}^{2}\bm{A}
=
\left(
\begin{array}{ccc}
4 & 1 & 3
\\
5 & 2 & 4
\\
1 & 1 & 3
\end{array}
\right),
\\
\bm{B}\bm{A}\bm{B}
=
\left(
\begin{array}{rrr}
3 & 0 & 4
\\
4 & 2 & 5
\\
-1 & 1 & 4
\end{array}
\right),
\qquad
\bm{A}\bm{B}^{2}
=
\left(
\begin{array}{rrr}
4 & -2 & 4
\\
4 & 1 & 4
\\
2 & 0 & 3
\end{array}
\right).
\end{gather*}

With the above matrices, we have
$$
\bm{C}_{1}
=
\left(
\begin{array}{ccc}
4 & 2 & 5
\\
5 & 3 & 5
\\
3 & 1 & 4
\end{array}
\right),
\quad
\bm{C}_{1}\bm{D}
=
\left(
\begin{array}{ccc}
4 & 3 & 5
\\
5 & 3 & 5
\\
3 & 2 & 4
\end{array}
\right),
\quad
\mathop\mathrm{tr}\nolimits(\bm{C}_{1}\bm{D})
=
4.
$$

Furthermore, we compute the matrices
$$
\bm{B}\bm{A}^{2}
=
\left(
\begin{array}{rrr}
6 & 5 & 7
\\
8 & 6 & 8
\\
7 & 3 & 0
\end{array}
\right),
\quad
\bm{A}\bm{B}\bm{A}
=
\left(
\begin{array}{rrr}
6 & 6 & 8
\\
7 & 6 & 8
\\
6 & 4 & 6
\end{array}
\right),
\quad
\bm{A}^{2}\bm{B}
=
\left(
\begin{array}{rrr}
4 & 6 & 9
\\
6 & 4 & 8
\\
5 & 3 & 6
\end{array}
\right),
$$
and then find
$$
\bm{C}_{2}
=
\left(
\begin{array}{rrr}
8 & 6 & 9
\\
8 & 6 & 8
\\
7 & 4 & 6
\end{array}
\right),
\quad
\bm{C}_{2}\bm{D}
=
\left(
\begin{array}{ccc}
8 & 7 & 9
\\
8 & 6 & 8
\\
7 & 5 & 6
\end{array}
\right),
\quad
\bm{C}_{3}\bm{D}
=
\left(
\begin{array}{ccc}
12 & 10 & 10
\\
10 & 9 & 8
\\
9 & 7 & 9
\end{array}
\right).
$$

After evaluating the second and third terms, we get
$$
\mathop\mathrm{tr}\nolimits(\bm{C}_{2}\bm{D})
=
8,
\qquad
\mathop\mathrm{tr}\nolimits(\bm{C}_{3}\bm{D})
=
12,
\qquad
\theta
=
4.
$$

We now derive the solution vectors $\bm{x}$ and $\bm{y}$ according to \eqref{E-xy} and \eqref{E-u}. First, we compute the matrices
$$
\bm{S}
=
\left(
\begin{array}{rrr}
0 & -2 & 1
\\
0 & -1 & 2
\\
-1 & -3 & -1
\end{array}
\right),
\quad
\bm{S}^{2}
=
\left(
\begin{array}{rrr}
0 & -2 & 1
\\
1 & -1 & 1
\\
-1 & -3 & 0
\end{array}
\right),
\quad
\bm{S}^{\ast}
=
\left(
\begin{array}{rrr}
0 & -2 & 1
\\
1 & 0 & 2
\\
-1 & -3 & 0
\end{array}
\right).
$$

We take the last matrix to get
$$
\bm{A}\bm{S}^{\ast}
=
\left(
\begin{array}{ccc}
4 & 2 & 5
\\
4 & 3 & 5
\\
2 & 1 & 3
\end{array}
\right),
\quad
\bm{h}^{-}\bm{A}\bm{S}^{\ast}
=
\left(
\begin{array}{ccc}
-1 & -2 & 0
\end{array}
\right),
\quad
(\bm{h}^{-}\bm{A}\bm{S}^{\ast})^{-}
=
\left(
\begin{array}{c}
1
\\
2
\\
0
\end{array}
\right).
$$

Denote by $\bm{u}_{1}$ and $\bm{u}_{2}$ the lower and upper bounds for the vector $\bm{u}$, which are defined by \eqref{E-u}, and write
$$
\bm{u}_{1}
=
\left(
\begin{array}{c}
0
\\
0
\\
0
\end{array}
\right),
\qquad
\bm{u}_{2}
=
\left(
\begin{array}{c}
1
\\
2
\\
0
\end{array}
\right).
$$

The bounds on the vector $\bm{u}$ produce corresponding bounds $\bm{x}_{1}$ and $\bm{x}_{2}$ on the vector $\bm{x}$. Evaluating the bounds on $\bm{x}$ gives
$$
\bm{x}_{1}
=
\bm{S}^{\ast}\bm{u}_{1}
=
\left(
\begin{array}{c}
1
\\
2
\\
0
\end{array}
\right),
\qquad
\bm{x}_{2}
=
\bm{S}^{\ast}\bm{u}_{2}
=
\left(
\begin{array}{c}
1
\\
2
\\
0
\end{array}
\right).
$$

Since these bounds actually define a single vector, we arrive at a unique solution to the problem
$$
\bm{x}
=
\left(
\begin{array}{c}
1
\\
2
\\
0
\end{array}
\right),
\qquad
\bm{y}
=
\bm{A}\bm{x}
=
\left(
\begin{array}{c}
5
\\
5
\\
3
\end{array}
\right).
$$

\subsection*{Acknowledgments} The author thanks the reviewer and an editor for valuable comments and suggestions, which have been incorporated into the final version.

\bibliographystyle{utphys}

\bibliography{A_constrained_tropical_optimization_problem_complete_solution_and_application_example}

\end{document}